\documentclass[pdflatex,sn-mathphys-num]{sn-jnl}

\usepackage{graphicx}%
\usepackage{multirow}%
\usepackage{amsmath,amssymb,amsfonts}%
\usepackage{amsthm}%
\usepackage{mathrsfs}%
\usepackage[title]{appendix}%
\usepackage{xcolor}%
\usepackage{textcomp}%
\usepackage{manyfoot}%
\usepackage{booktabs}%
\usepackage{algorithm}%
\usepackage{algorithmicx}%
\usepackage{algpseudocode}%
\usepackage{listings}%

\global\long\def\Re{\operatorname{Re}}

\global\long\def\Im{\operatorname{Im}}

\global\long\def\Arg{\operatorname{Arg}}

\theoremstyle{thmstyleone}%
\newtheorem{theorem}{Theorem}
\theoremstyle{thmstyletwo}%

\theoremstyle{thmstylethree}%
\newtheorem{lemma}[theorem]{Lemma}

\begin{document}

\title[Sum of Gegenbauer polynomials]{On zero distribution of the sum of Gegenbauer polynomials}


\author*[1]{\fnm{Khang} \sur{Tran}}\email{khangt@mail.fresnostate.edu}

\author[1]{\fnm{Riley} \sur{White}}\email{rcwhite@mail.fresnostate.edu}
\equalcont{These authors contributed equally to this work.}

\affil*[1]{\orgdiv{Department of Mathematics}, \orgname{California State University, Fresno}, \orgaddress{\street{5245 North Backer Avenue M/S PB108}, \city{Fresno}, \postcode{93740}, \state{CA}, \country{USA}}}


\abstract{We study the zero distribution of the sum of the first $n$ Gegenbauer polynomials
\[
S_n^{(\alpha)}(z)=\sum_{k=0}^n C_{k}^{(\alpha)}(z).
\]
We prove that for $\alpha>1$, the zeros of $S_n^{(\alpha)}(z)$ lie on the interval $(-1,1)$ for all large $n$.}

\keywords{Zero distribution; Asymptotics; Generating Function; Gegenbauer polynomials}


\pacs[MSC Classification]{30C15, 26C10, 11C08}

\maketitle

\section{Introduction}
The sequence of Gegenbauer polynomials, $\left\{ C_n^{(\alpha)}(z)\right\}_{n=0}^\infty$, generated by 
\begin{equation} \label{eq:Gegenbauergen}
\sum_{n=0}^\infty C^{(\alpha)}_n(z)t^n=\frac{1}{(1-2zt+t^2)^\alpha}
\end{equation}
play an important role in mathematics. These polynomials generalize many other polynomials, such as the Chebyshev polynomials of the second kind ($\alpha=1$) and Legendre polynomials ($\alpha=1/2$). For each $n\in \mathbb{N}$, the sum of the first $n$ Chebyshev polynomials of the first kind is important since it relates to the famous Dirichlet's kernel,
\[
D_n(x)= \frac{\sin((n+1/2)x)}{\sin (x/2)}.
\]
From this relation, it is known that the zeros of the sum of the first $n$ Chebyshev polynomials of the first and second kind lie on the interval $(-1,1)$  \cite{rivlin} (or \cite{at} for a generalized result). However, the zeros of the sum of the first $n$ Legendre polynomials, $\sum_{k=0}^n L_n(z)$, are not all real (see Figure \ref{fig:sumLegendre} for the zeros of this sum for $n=20$). Given the difference between the zero distribution of the sum of Chebyshev polynomials and that of Legendre polynomials, it is natural to ask for conditions on $\alpha$ under which the zeros of the sum of Gegenbauer polynomials, $\sum_{k=0}^n C_k^{(\alpha)}(z)$, lie on $(-1,1)$.   
\begin{figure} 
    \centering
    \includegraphics[width=0.5\linewidth]{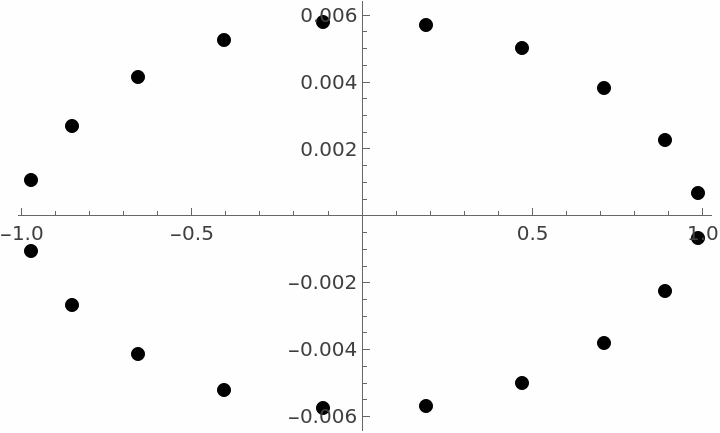}
    \caption{Zeros of the sum of Legendre polynomials}
    \label{fig:sumLegendre}
\end{figure}

\begin{theorem}\label{thm:mainthm}
Let $\left\{ S_n^{(\alpha)}(z) \right\}_{n=0}^\infty$  be the sequence of the sums of Gegenbauer polynomials:
\[
S^{(\alpha)}_n(z)=\sum_{k=0}^n C_k^{(\alpha)}(z).
\]
Then for $\alpha\geq1$, the zeros of $S^{(\alpha)}_n(z)$ lie on $(-1,1)$ for all large $n$.
\end{theorem}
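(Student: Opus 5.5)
The plan is to put $z=\cos\theta$ with $\theta\in(0,\pi)$ and produce at least $n$ sign changes of $S_n^{(\alpha)}(\cos\theta)$ on $(0,\pi)$. Since $\deg S_n^{(\alpha)}=n$, these sign changes account for all zeros, and all of them then lie on $(-1,1)$.

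\textbf{Generating function.} From (\ref{eq:Gegenbauergen}),
\[
\sum_{n\ge0}S_n^{(\alpha)}(z)t^n=\frac{1}{(1-t)(1-2zt+t^2)^{\alpha}} ,
\]
so
\[
S_n^{(\alpha)}(z)=\frac{1}{2\pi i}\oint_{|t|=r<1}\frac{dt}{t^{n+1}(1-t)(t-e^{i\theta})^{\alpha}(t-e^{-i\theta})^{\alpha}} .
\]
I would deform the contour to a large circle, which contributes nothing in the limit. Three pieces remain.
\begin{itemize}
\item The simple pole at $t=1$ contributes $-(2-2\cos\theta)^{-\alpha}$, which is bounded in $n$ away from $\theta=0$.
\item The two branch points $e^{\pm i\theta}$ contribute Hankel-type loop integrals.
\item A standard Watson-lemma expansion of those loop integrals gives
\[
\frac{2\,n^{\alpha-1}}{\Gamma(\alpha)\,(2\sin\theta)^{\alpha}\,|1-e^{i\theta}|}\,
\cos\!\big((n+\alpha+1)\theta-\tfrac{\alpha\pi}{2}-\psi(\theta)\big)\,\big(1+O(1/(n\sin\theta))\big),
\]
where $\psi$ is the argument coming from the factor $(1-e^{i\theta})^{-1}$.
\end{itemize}

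\textbf{Main term dominates.} For $\alpha>1$ the factor $n^{\alpha-1}\to\infty$, so the oscillating term dominates the constant from the pole. The step I would do carefully is making all error terms explicit and uniform for $\theta\in[c/n,\pi-c/n]$, with $c$ a large fixed constant.

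\textbf{Counting sign changes.} On that interval I evaluate at the points where the cosine equals $\pm1$. The phase $(n+\alpha+1)\theta+\text{const}$ runs over an interval of length $(n+\alpha+1)(\pi-2c/n)$. This yields roughly $n-O(1)$ alternations, and hence $n-O(1)$ zeros.

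\textbf{Endpoint regions (main obstacle).} The last $O(1)$ zeros must be recovered in $\theta\in(0,c/n)$ and $(\pi-c/n,\pi)$.
\begin{itemize}
\item Near $\theta=\pi$, the branch points coalesce at $t=-1$, away from the pole. After rescaling $\theta=\pi-u/n$ the sum has a Mehler--Heine/Bessel-type limit $n^{2\alpha-1}u^{1/2-\alpha}J_{\alpha-1/2}(u)\cdot(\text{const})$. Its zeros in $u\in(0,c)$ match exactly the missing oscillations, by Hurwitz's theorem.
\item Near $\theta=0$, all three singularities coalesce at $t=1$. Here I would rescale $\theta=u/n$ and compute the limit $n^{-2\alpha}S_n^{(\alpha)}(\cos(u/n))\to F(u)$, with $F$ given by an explicit integral (a Bessel-type function integrated against $1$, obtained from summing the Mehler--Heine limit over $k$). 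I would then check that $F$ has the right number of simple zeros in $(0,c)$ and none at $u=0$.
\end{itemize}
Matching the counts across the overlap $\theta\asymp c/n$ then gives exactly $n$ zeros. I expect this matching, together with the uniform error control near $\theta=0$ where the pole and branch points interact, to be the hardest step.

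\textbf{The case $\alpha=1$.} For $\alpha=1$ I would argue directly. We have $S_n^{(1)}(\cos\theta)=\sum_{k=0}^n\sin((k+1)\theta)/\sin\theta$, which sums in closed form to
\[
\frac{\sin\frac{(n+1)\theta}{2}\,\sin\frac{(n+2)\theta}{2}}{\sin\theta\,\sin\frac{\theta}{2}} .
\]
Its zeros in $(0,\pi)$ are explicit, and there are exactly $n$ of them.
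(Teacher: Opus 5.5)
Your route is correct in outline but genuinely different from the paper's. The paper proceeds in four steps.
\begin{itemize}
\item It reduces to $1<\alpha<2$ via $\frac{d}{dz}S_n^{(\alpha)}=2\alpha S_{n-1}^{(\alpha+1)}$.
\item It proves $|f_n(\theta)|>\pi(2-2\cos\theta)^{-\alpha}$ on $(3\alpha/n,\pi-n^{-3/2})$. Near $\theta=0$ this rests on an explicit lower bound for $\Im J(\theta)$ and the inequality $(c/2)^{\alpha-1}-(\alpha-1)(c/2)^{\alpha-2}\ge\Gamma(\alpha)$ for $c\ge3\alpha$.
\item It counts crossings of $f_n$ with $i\mathbb{R}$ through the change of argument, which gives $n-1$ zeros.
\item It gets the last zero from conjugate symmetry plus the lemma that $S_n^{(\alpha)}(x)\neq0$ for $|x|\ge1$.
\end{itemize}
The paper never counts zeros inside the boundary layers.

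You use the oscillatory asymptotics only on $[c/n,\pi-c/n]$ with $c$ large, and count zeros in both layers exactly via scaling limits and Hurwitz. That costs more work. You must show that $F(u)\propto u^{-2\alpha}\int_0^u t^{\nu}J_\nu(t)\,dt$, with $\nu=\alpha-\tfrac12>\tfrac12$, has exactly one simple zero between consecutive zeros of $J_\nu$ and none on $(0,j_{\nu,1}]$. This follows if the lobe areas of $t^\nu J_\nu$ increase. You must also do the integer bookkeeping across both overlaps. You can lighten both by borrowing the paper's reduction to $1<\alpha<2$ (so $\nu\in(\tfrac12,\tfrac32)$) and its conjugate-symmetry/$|x|\ge1$ device.

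What your route buys is a correct treatment of the layer $\pi-\theta=\mathcal{O}(1/n)$. The paper asserts (\ref{eq:asymptoticlargetheta}) uniformly up to $\pi-n^{-3/2}$, but there $e^{\pm i\theta}$ coalesce at $-1$. The relative error in $1-e^{u+2i\theta}=(1-e^{2i\theta})(1+\mathcal{O}(u))$ is really $\mathcal{O}(|u|/(\pi-\theta))$, not $\mathcal{O}(|u|)$. Concretely, at $\pi-\theta=n^{-3/2}$ that formula would force $|S_n^{(\alpha)}(\cos\theta)|\asymp n^{5\alpha/2-1}$. A termwise comparison with $\sum_k(-1)^kC_k^{(\alpha)}(1)$ instead gives $\mathcal{O}(n^{2\alpha-1})$. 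Your Mehler--Heine layer at $\theta=\pi$ is exactly what is needed there.

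There are two slips to fix.
\begin{itemize}
\item With $\psi(\theta)=\arg(1-e^{i\theta})^{-1}=\tfrac{\pi}{2}-\tfrac{\theta}{2}$, the frequency in your phase should be $n+\alpha$, not $n+\alpha+1$. The correct phase is $(n+\alpha+\tfrac12)\theta-(\alpha+1)\tfrac{\pi}{2}$. Your own $\alpha=1$ closed form confirms this: its oscillating part is $-\cos((n+\tfrac32)\theta)/(2\sin\theta\sin\tfrac{\theta}{2})$. The extra $\theta$ would create a spurious sign alternation over $(0,\pi)$ and break your exact matching.
\item Near $\theta=c/n$ the pole term is $\asymp(n/c)^{2\alpha}$, not bounded. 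The main term dominates there because the ratio of amplitudes is about $(c/2)^{\alpha-1}/\Gamma(\alpha)$, which is large for large $c$ since $\alpha>1$. It is not because $n^{\alpha-1}\to\infty$. This is precisely the point the paper quantifies with its $c\ge3\alpha$ inequality.
\end{itemize}
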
 

Since the case $\alpha = 1$ is a well-known result, we assume $\alpha>1$. Moreover, at the beginning of the next section, we argue that it suffices to prove Theorem \ref{thm:mainthm} for $1<\alpha<2$. In fact, we  show not only that the zeros of $S_n^{(\alpha)}(z)$ lie on $(-1,1)$, but also that the limiting density function for these zeros is the same as that of the Gegenbauer polynomial, which is \cite{szego}
\[
\frac{1}{\pi \sqrt{1-z^2}}, \quad z\in(-1,1).
\]
The paper is organized as follows. We find a complex integral representation for $S_{n}^{(\alpha)}(z)$ in Section 2 and an asymptotic formula for this sum in Section 3. We conclude the paper with a proof of Theorem \ref{thm:mainthm} in Section 4.  
\section{Generating function and integral representation}
We note that 
\begin{align*}
    \sum_{n=0}^\infty S^{(\alpha)}_n(z)t^n &= \sum_{n=0}^\infty \sum_{k=0}^nC_k^{(\alpha)}(z)t^n  \\
    &= \sum_{k=0}^\infty \sum_{n=k}^\infty C_k^{(\alpha)}(z)t^{n}  \\
    &= \sum_{k=0}^\infty t^k \sum_{n=0}^\infty C_k^{(\alpha)}(z)t^n  \\
    &= \frac{1}{(1-t)(1-2zt+t^2)^\alpha}. 
\end{align*}
We differentiate the first and last expressions with respect to $z$ to conclude 
\begin{align*}
\sum_{n=0}^\infty \frac{d(S_n^{(\alpha)}(z))}{dz}t^n &= \frac{2t\alpha}{(1-t)(1-2zt+t^2)^{\alpha+1}}    \\
&= 2\alpha\sum_{n=0}^\infty S_n^{(\alpha+1)}(z)t^{n+1},
\end{align*}
from which we deduce that for $n\ge 1$
\[
2\alpha S_{n-1}^{(\alpha+1)}(z) = \frac{d(S_n^{(\alpha)}(z))}{dz}.
\]
Note that if the zeros of a polynomial lie on $(-1,1)$, then so do the zeros of its derivative. Thus, it suffices to prove Theorem \ref{thm:mainthm} when $\alpha\in(1,2)$, an assumption we make for the remainder of this paper. 
In our approach, we will show that for each large $n\in\mathbb{N}$, the polynomial $S^{(\alpha)}_n(z)$ has $n$ zeros on the interval $(-1,1)$. For this purpose, we assume $z\in(-1,1)$ and let $z=\cos(\theta)$ for $\theta\in(0,\pi)$.   With the principal cut, we note that for small $t$
\[
-\pi<\Arg(1-te^{i\theta})+\Arg(1-te^{-i\theta})<\pi.
\]
Consequently
\[
(1-2zt+t^2)^\alpha=(1-2\cos(\theta)t+t^2)^\alpha=(1-te^{i\theta})^\alpha(1-te^{-i\theta})^\alpha,
\]
from which we deduce that 
\begin{equation} \label{eq:genfunc}
\sum_{n=0}^\infty S^{(\alpha)}_n(\cos\theta)t^n = \frac{1}{(1-t)(1-te^{i\theta})^\alpha(1-te^{-i\theta})^\alpha}.
\end{equation}
By the Cauchy Differentiation formula, we have 
\begin{equation} \label{eq:intsmallloop}
S_n^{(\alpha)}(\cos\theta)=\frac{1}{2\pi i} \oint_{|t|=\epsilon}\frac{dt}{t^{n+1}(1-t)(1-te^{i\theta})^\alpha(1-te^{-i\theta})^\alpha}.
\end{equation}
We note that, with the principal cut, the integrand
\begin{equation} \label{eq:gndef}
g_n(t):=\frac{1}{t^{n+1}(1-t)(1-te^{i\theta})^\alpha(1-te^{-i\theta})^\alpha}
\end{equation}
has singularities at $t=0$, $t=1$, and the two rays $r e^{\pm i \theta}$, $1\le r<\infty$ (see Figure \ref{fig:contourint}). 
\begin{figure}
    \centering
    \includegraphics[width=0.5\linewidth]{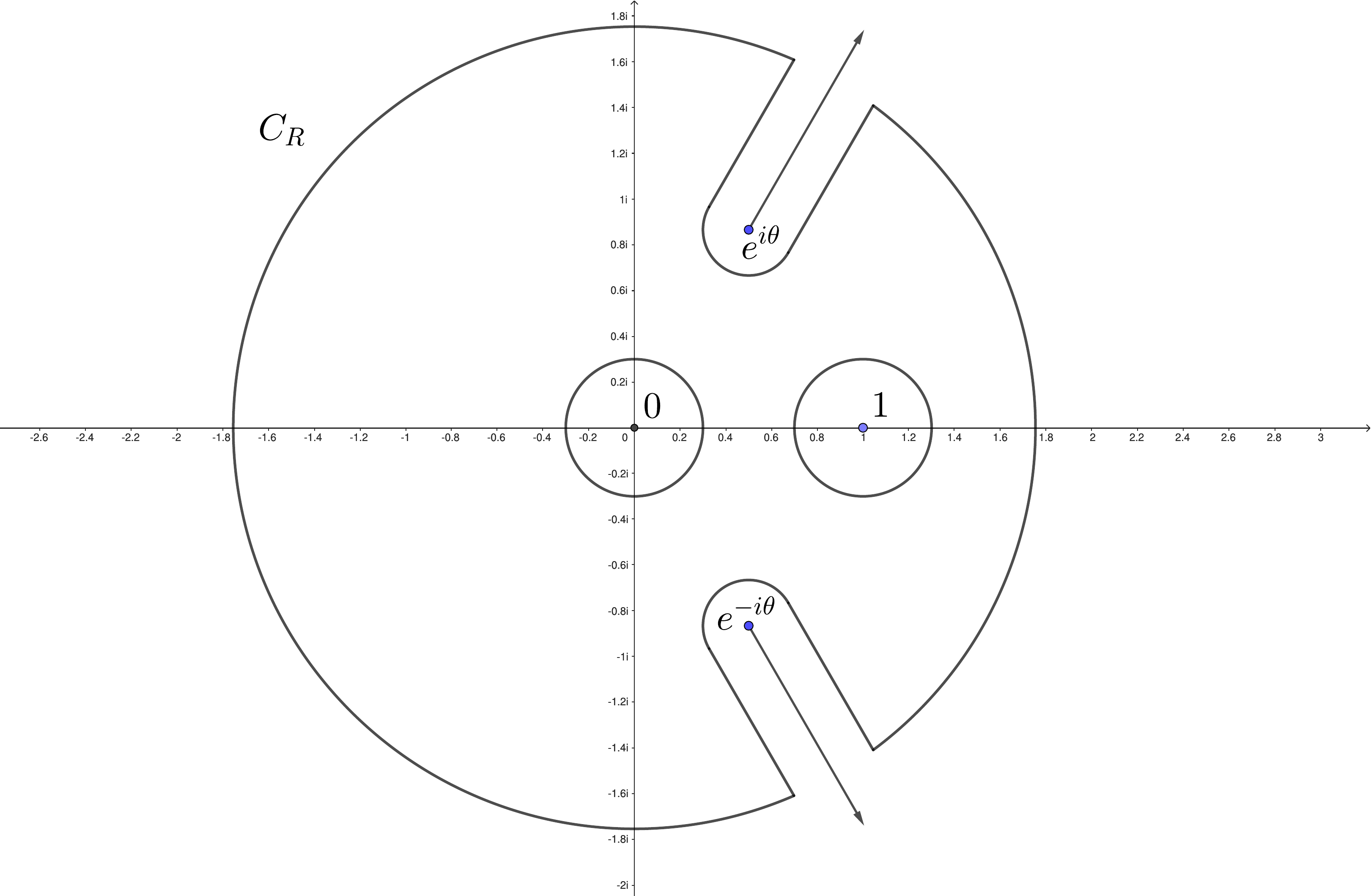}
    \caption{Contour of integration}
    \label{fig:contourint}
\end{figure}

On any circular arc $C_R:t=Re^{i\phi},a\le\phi\le b$, we have 
\begin{align*}
\left| \int_{C_R}   \frac{dt}{t^{n+1}(1-t)(1-te^{i\theta})^\alpha(1-te^{-i\theta})^\alpha} \right| \le \int_{a}^b\frac{d\phi}{R^n (R-1)^{2\alpha+1}}\rightarrow0
\end{align*}
as $R\rightarrow \infty$.  Thus  (\ref{eq:intsmallloop}) becomes
\begin{equation} \label{eq:cauchydeformed}
S_n^{(\alpha)}(\cos\theta)=\frac{1}{2\pi i} \left( \int_\gamma  g_n(t)dt - \int_{\overline{\gamma}}  g_n(t)dt\right) - \operatorname{Res}(g(t),1)
\end{equation}
where $\gamma$ is the contour around the ray $r e^{i\theta}$, $1\le r<\infty$,  and $\overline{\gamma}$ is its conjugate contour around $e^{-i\theta}$ (see Figure \ref{fig:contourint}).  From
\[
\overline{\int_\gamma g_n(t)dt}=\int_{\gamma}\overline{g_n(t)}\overline{dt}=\int_{\gamma}g_n(\overline{t})\overline{dt}=\int_{\overline{\gamma}}g_n(t)dt,
\]
the first term of the right side of  (\ref{eq:cauchydeformed}) is 
\[
\frac{1}{\pi}\Im\left(\int_\gamma g_n(t)dt\right).
\]
On the other hand, the second term of this expression is
\[
\operatorname{Res}(g(t),1)=\frac{1}{(1-e^{i\theta})^\alpha (1-e^{-i\theta})^\alpha} = \frac{1}{(2-2\cos\theta)^\alpha},
\]
from which we deduce that 
\[
S_n^{(\alpha)}(\cos\theta)=\frac{1}{\pi}\Im \left( \int_\gamma \frac{dt}{t^{n+1}(1-t)(1-te^{i\theta})^\alpha(1-te^{-i\theta})^\alpha}\right)-\frac{1}{(2-2\cos\theta)^\alpha}.    
\]
We make the substitution $t= e^{u+i\theta}$ to rewrite this equation as 
\begin{equation}\label{eq:s_nform}
\pi S_n^{(\alpha)}(\cos(\theta))= \Im (f_n(\theta))-\frac{\pi}{(2-2\cos \theta)^\alpha}
\end{equation}
where 
\begin{equation} \label{eq:fndef}
f_n(\theta)=\frac{1}{e^{in\theta}} \int_{+\infty}^{(0^-)} \frac{du}{(1-e^{u+i\theta})(1-e^u)^\alpha(1-e^{u+2i\theta})^\alpha e^{nu}}
\end{equation}
and the integral is taken over a clockwise Hankel contour around $[0,\infty)$ ($[0,\infty)$ are the only singularities of the integrand inside this contour).

\section{Asymptotic behavior}
We will show that $S_n^{(\alpha)}(\cos \theta)$ in (\ref{eq:s_nform}) has $n$ zeros on the interval $(0,\pi)$ using the Intermediate Value Theorem. To apply this theorem, we find a (finite) sequence of angles $\theta_1<\theta_2<\cdots<\theta_{n+1}$ on $(0,\pi)$ such that
\[
S_n^{(\alpha)}(\cos\theta_k)S_n^{(\alpha)}(\cos\theta_{k+1})<0.
\]
The values of $\theta_k$ will come from those where $f_n(\theta_k)\in i \mathbb{R}$. At those values of $\theta_k$ the sign of  $S_n^{(\alpha)}(\cos\theta_k)$ is the same as that of $\Im( f_n(\theta_k))$ if we can show
\[
|f_n(\theta_k)|>\frac{\pi}{(2-2\cos\theta_k)^{\alpha}}.
\]

Motivated by the approach above, in this section, we show that for large $n$, the inequality
\begin{equation}\label{eq:mainineq}
 \left| \int_{+\infty}^{(0^-)} \frac{du}{(1-e^{u+i\theta})(1-e^u)^\alpha(1-e^{u+2i\theta})^\alpha e^{nu}}\right| > \frac{\pi}{(2-2\cos\theta)^\alpha}
\end{equation}
holds for all $\theta \in (3\alpha/n,\pi-1/(n\sqrt{n}))$. We break this interval of $\theta$ into two intervals $[3\alpha/n,\delta)$ and $[\delta,\pi-1/(n\sqrt{n}))$ for a fixed (independent of $n$) small $\delta$.

\subsection{Case $\theta \in[\delta,\pi-1/(n\sqrt{n}))$} In this subsection, we will show that for any small $\delta$: 
\begin{equation} \label{eq:asymptoticlargetheta}
\int_{+\infty}^{(0^-)} \frac{du}{(1-e^{u+i\theta})(1-e^u)^\alpha(1-e^{u+2i\theta})^\alpha e^{nu}} \sim 
 \frac{2i\pi n^{\alpha-1}}{\Gamma(\alpha)(1-e^{i\theta})(1-e^{2i\theta})^\alpha }
\end{equation}
uniformly in $\theta \in [\delta,\pi-1/(n\sqrt{n}))$. This uniform convergence will imply (\ref{eq:mainineq}) for large $n$ and $\theta \in [\delta,\pi-1/(n\sqrt{n}))$. For each $n$, we can choose a  Hankel contour  $\gamma$ as the union of two rays
\[
l_+(x)=\frac{1}{\sqrt{n}}+i\frac{1}{n\sqrt{n}}+x,\quad 0\le x<\infty
\]
and 
\[
l_{-}(x)=\frac{1}{\sqrt{n}}-i\frac{1}{n\sqrt{n}}-x,\quad -\infty<x\le 0,
\]
and the curve $\lambda$ which starts from $1/\sqrt{n} -i/(n\sqrt{n})$, winds around the origin with a clockwise orientation, and ends at $1/\sqrt{n}+i/(n\sqrt{n})$.  Let $R(\gamma)$ be the region containing $[0,\infty)$ whose boundary is $\gamma$ (see Figure \ref{fig:Hankel}).  

\begin{figure}
    \centering
    \includegraphics[width=0.5\linewidth]{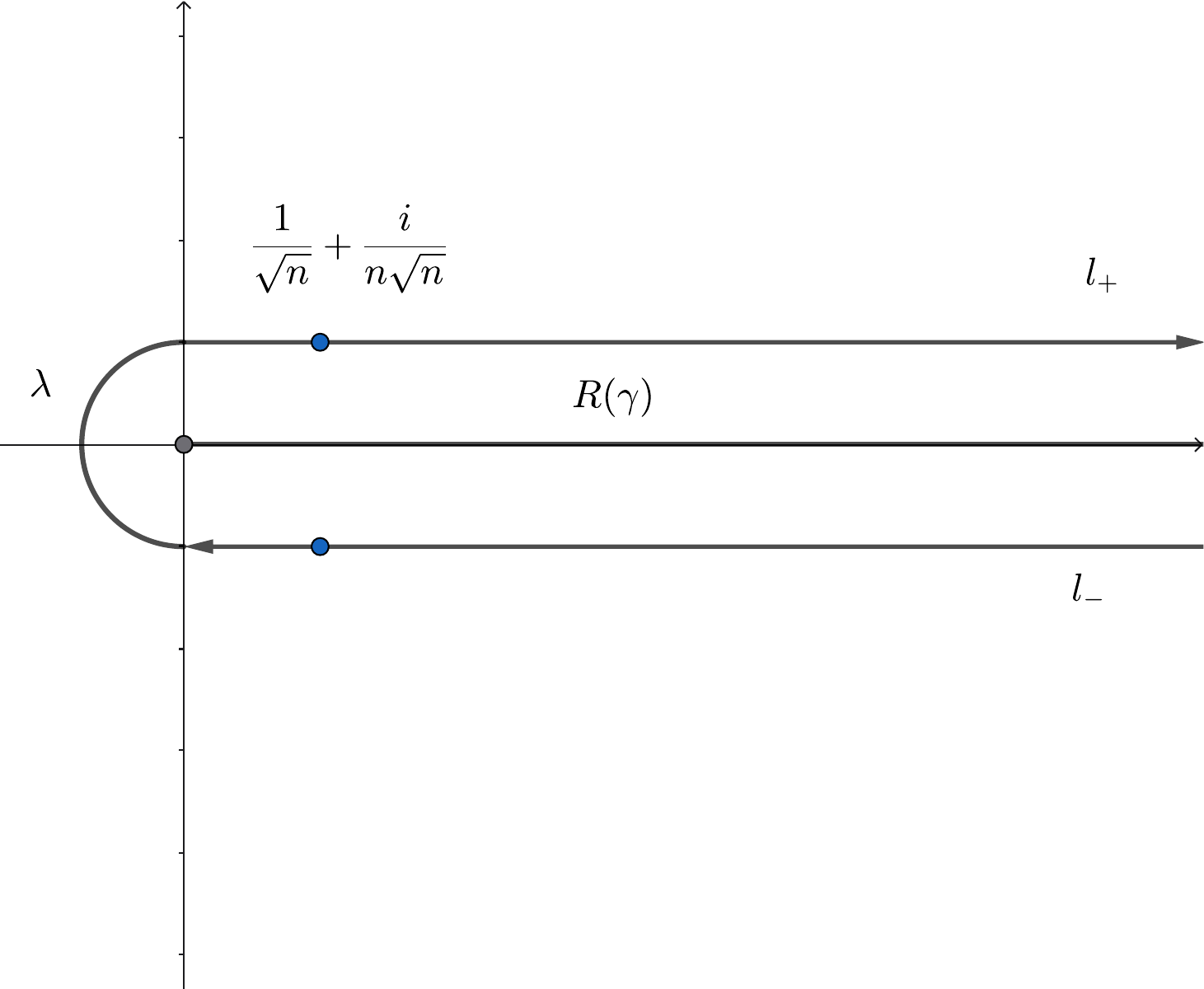}
    \caption{Hankel contour}
    \label{fig:Hankel}
\end{figure}

To show that $\gamma$ is valid, we prove that besides $[0,\infty)$, all the singularities of the integrand lie outside the closure $\overline{R(\gamma)}$  in the lemma below.

\begin{lemma}
If $1-e^{z+i\theta}=0$ or $1-e^{z+2i\theta} \in [-\infty,0)$, then $z\notin \overline{R(\gamma)}$ for large $n$ and $\theta\in [\delta,\pi-1/(n\sqrt{n}))$. 
\end{lemma}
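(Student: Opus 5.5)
The plan is to describe the two singular sets explicitly and show that each stays a positive distance (measured against the thin contour) away from $\overline{R(\gamma)}$. The region $\overline{R(\gamma)}$ is contained in the set of $z$ with $\Re z\ge -c/\sqrt{n}$ and $|\Im z|\le 1/(n\sqrt n)$, together with the small disk bounded by $\lambda$. Here we choose $\lambda$, say, as the arc of the circle $|z|=\rho_n$ with $\rho_n=|1/\sqrt n+ i/(n\sqrt n)|\approx 1/\sqrt n$ passing around the left of the origin. Thus every point of $\overline{R(\gamma)}$ satisfies either $|z|\le \rho_n$, or $\Re z\ge 0$ and $|\Im z|\le 1/(n\sqrt n)$. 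So it suffices to show that each singular point $z$ has $|\Im z|>1/(n\sqrt n)$ and $|z|>\rho_n$.

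\textbf{First set.} We have $1-e^{z+i\theta}=0$ iff $z=-i\theta+2\pi i k$, $k\in\mathbb Z$, so these points are purely imaginary. Since $\theta\in[\delta,\pi)$, the quantity $\theta-2\pi k$ has absolute value at least $\min(\theta,2\pi-\theta)\ge\delta$. Hence $|z|\ge\delta>\rho_n$ and $|\Im z|\ge \delta>1/(n\sqrt n)$ for large $n$, and none of these points lies in $\overline{R(\gamma)}$.

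\textbf{Second set.} We have $1-e^{z+2i\theta}\in[-\infty,0)$ iff $e^{z+2i\theta}\in(1,\infty]$. Writing $z=x+iy$, this means $x>0$ (or $x=+\infty$) and $y+2\theta\in 2\pi\mathbb Z$. So $y=2\pi k-2\theta$, and $|y|=\operatorname{dist}(2\theta,2\pi\mathbb Z)$. For $\theta\in[\delta,\pi-1/(n\sqrt n))$, we have $2\theta\in[2\delta,2\pi-2/(n\sqrt n))$, so this distance is at least $\min(2\delta, 2/(n\sqrt n))=2/(n\sqrt n)$ for large $n$. Thus $|\Im z|\ge 2/(n\sqrt n)>1/(n\sqrt n)$. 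Moreover, any such point satisfies $x>0$, so it can only meet $\overline{R(\gamma)}$ through the strip part or the disk part. The strip part is excluded by the imaginary-part bound. For the disk part, it suffices that $|y|>\rho_n$ or that $x$ is large. Here care is needed: when $\theta$ is near $\pi$, $|y|$ can be as small as $2/(n\sqrt n)<\rho_n$, and a point $x+iy$ with small $x>0$ could then lie in the disk $|z|\le\rho_n$.

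This is the main obstacle: the geometry near the corner where $\lambda$ meets the rays. I would resolve it by choosing $\lambda$ more carefully rather than as a full circle. Take $\lambda$ to consist of the vertical segments from $1/\sqrt n\pm i/(n\sqrt n)$ out to $1/\sqrt n\pm i\rho_n$ is not allowed either, since the line $\{x>0,\ y=2\pi k-2\theta\}$ crosses any loop around the origin unless the loop passes through $\Re z\le 0$ at height $|y|$. The correct choice is therefore a loop hugging the origin that stays within the strip $|\Im z|\le 1/(n\sqrt n)$ whenever $\Re z>0$. For example, take $\lambda$ to be the segments from $1/\sqrt n\pm i/(n\sqrt n)$ horizontally to $\pm i/(n\sqrt n)$, joined by the left semicircle $|z|=1/(n\sqrt n)$, $\Re z\le 0$. (The paper's figure allows any such clockwise winding curve.) With this choice, every point of $\overline{R(\gamma)}$ with $\Re z>0$ has $|\Im z|\le 1/(n\sqrt n)$, and the second set, having $x>0$ and $|y|\ge 2/(n\sqrt n)$, is excluded. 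The first set lies on the imaginary axis at height at least $\delta$, outside the small semicircle. This completes the lemma for large $n$, uniformly in $\theta\in[\delta,\pi-1/(n\sqrt n))$.
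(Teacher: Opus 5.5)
Your final argument is correct and is essentially the paper's. With $\lambda$ taken as the two horizontal segments joined by the left semicircle of radius $1/(n\sqrt n)$ (exactly the curve the paper uses in the proof of Lemma~\ref{lem:boundtailgamma}), all of $\overline{R(\gamma)}$ lies in the strip $|\Im z|\le 1/(n\sqrt n)$, whereas the singular points have $|\Im z|\ge \operatorname{dist}(\theta,2\pi\mathbb{Z})\ge\delta$ or $|\Im z|\ge\operatorname{dist}(2\theta,2\pi\mathbb{Z})>1/(n\sqrt n)$ for large $n$.

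The detour through the circle of radius $\rho_n$ and the extra use of $\Re z>0$ are unnecessary once the strip bound is in hand. Still, your observation that a loop of radius about $1/\sqrt n$ would meet the cut of $(1-e^{u+2i\theta})^\alpha$ for $\theta$ near $\pi$ correctly explains why the paper's loop must be so thin.
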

\begin{proof}
 Suppose $1-e^{z+2i\theta} \in [-\infty,0)$. This is equivalent to $\Im(z)+2\theta =2k\pi$, $k\in \mathbb{Z}$, and $1-e^{\Re z}<0$.  If $z\in \overline{R(\gamma)}$, then $|\Im z|\le 1/(n\sqrt{n})$. This contradicts to $\Im(z)+2\theta =2k\pi$  for $k\in \mathbb{Z}$ and $\theta\in(\delta,1/(n\sqrt{n}))$. We apply the same argument to the case $1-e^{z+i\theta}$  and complete the proof of this lemma. 
\end{proof}

The integrals over the two tails $l_{+}(x)$  and $l_{-}(x)$ is at most
\[
\int_{0}^\infty \frac{dx}{(e^{1/\sqrt{n}}-1)^{2\alpha+1}e^{\sqrt{n}+nx}}=\mathcal{O} \left( \frac{n^{\alpha-1/2}}{e^{\sqrt{n}}} \right).
\]
Next, we consider the integral over the curve $\lambda$, which is denoted by
\[
\int_{1/\sqrt{n}\pm i/(n\sqrt{n})}^{(0^-)} \frac{du}{(1-e^{u+i\theta})(1-e^u)^\alpha(1-e^{u+2i\theta})^\alpha e^{nu}}.
\]
For $u$ on this curve and $\theta\in [\delta,\pi-1/(n\sqrt{n}))$, we have
\[
  1-e^{u+i\theta}=1-e^{i\theta}+\mathcal{O}(u)=(1-e^{i\theta})(1+\mathcal{O}(u)) 
\]
where the big-Oh constant here (and in the whole paper) is independent of $\theta$.  Together with similar identities $1-e^u=(-u)(1+\mathcal{O}(u))$ and $$1-e^{u+2i\theta}=(u+2i\theta)(1+\mathcal{O}(u)),$$  the integral over the curve $\lambda$  becomes
\begin{equation}\label{eq:intsplitmainterm}
\frac{1}{(1-e^{i\theta})(1-e^{2i\theta})^\alpha } \left( \int_{1/\sqrt{n}\pm i/(n\sqrt{n})}^{(0^-)} \frac{du}{(-u)^\alpha e^{nu}}+\mathcal{O}\left(\int_{1/\sqrt{n}\pm i/(n\sqrt{n})}^{(0^-)} \frac{|du|}{|u|^{\alpha-1} |e^{nu}|}\right) \right).
\end{equation}
The lemma below provides a bound for the big-Oh term of this expression. 
\begin{lemma} \label{lem:boundtailgamma}
    For $1<\alpha<2$, we have 
    \[
    \int_{1/\sqrt{n}\pm i/(n\sqrt{n})}^{(0^-)} \frac{|du|}{|u|^{\alpha-1} |e^{nu}|}=\mathcal{O}(n^{\alpha-2}).
    \]
    as $n\rightarrow\infty$.
\end{lemma}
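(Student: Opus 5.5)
We will bound the integral by splitting the curve $\lambda$ into explicit pieces and scaling each piece by $n$. The substitution $u=w/n$ turns $|du|/(|u|^{\alpha-1}|e^{nu}|)$ into $n^{\alpha-2}|dw|/(|w|^{\alpha-1}e^{\Re w})$. So it suffices to show that the rescaled contour integral $\int |w|^{1-\alpha}e^{-\Re w}\,|dw|$ over $n\lambda$ stays bounded as $n\to\infty$.

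Since $\lambda$ is only described loosely, we first fix a concrete choice; the integral in (\ref{eq:intsplitmainterm}) is independent of that choice. We take $\lambda$ to consist of three pieces:
\begin{itemize}
\item the two horizontal segments $\Im u=\pm 1/(n\sqrt n)$, $-1/(n\sqrt n)\le \Re u\le 1/\sqrt n$;
\item the left semicircle $|u|=1/(n\sqrt n)$, $\Re u\le 0$, joining them.
\end{itemize}
Every point of this contour keeps a positive distance from the other singularities identified in the preceding lemma, so the choice is admissible. After scaling by $n$, the segments become $\Im w=\pm 1/\sqrt n$, $-1/\sqrt n\le \Re w\le \sqrt n$, and the semicircle has radius $1/\sqrt n$.

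We then estimate each piece.

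On the semicircle, $|w|=1/\sqrt n$ and $e^{-\Re w}\le e^{1/\sqrt n}\le e$. Its length is $\pi/\sqrt n$, so its contribution is at most $e\pi (1/\sqrt n)^{2-\alpha}$, which is $o(1)$ since $\alpha<2$.

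On each horizontal segment, write $w=x\pm i/\sqrt n$. Then $|w|\ge |x|$, so the contribution is at most $\int_{-1/\sqrt n}^{\sqrt n}|x|^{1-\alpha}e^{-x}\,dx$. For $x\ge 0$ this is bounded by $\Gamma(2-\alpha)$, which is finite because $\alpha<2$. For $x<0$ the range has length $1/\sqrt n$ and the integrand is integrable, so this part is $o(1)$. If one prefers to avoid the singularity at $x=0$ altogether, use $|w|\ge 1/\sqrt n$ near $x=0$; either way the bound is uniform in $n$.

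Summing, the rescaled integral is $O(1)$, hence the original integral is $O(n^{\alpha-2})$. The bounds depend only on $\alpha$ and $n$, not on $\theta$, as the later uniformity requires.

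The main obstacle is the singularity of $|u|^{1-\alpha}$ at $0$. It is harmless precisely because $\alpha<2$: the exponent $1-\alpha$ is greater than $-1$, so $|x|^{1-\alpha}$ is integrable at $0$. This is the only place where the restriction $\alpha<2$ enters the estimate.
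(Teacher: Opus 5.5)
Your proof is correct and is essentially the paper's argument: you bound the semicircle by its length times the maximum of $|u|^{1-\alpha}e^{-n\Re u}$, and the horizontal segments via $|u|\ge \Re u$ followed by the rescaling by $n$, giving $n^{\alpha-2}\Gamma(2-\alpha)$; the only difference is that you rescale at the outset rather than inside the segment estimate. One small slip: as you describe it, the segments run to $\Re u=-1/(n\sqrt n)$ while the semicircle ends at $\pm i/(n\sqrt n)$, so the pieces do not join up — start the segments at $\Re u=0$, as the paper does, and your bounds apply unchanged (the $x<0$ contribution simply disappears).
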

\begin{proof}
   We note that the curve $\lambda$ consists of a semicircle of radius $1/n\sqrt{n}$ and two line segments $t\pm i/(n\sqrt{n})$, $0\le t\le 1/\sqrt{n}$ (see Figure \ref{fig:Hankel}).
   On the semicircle of radius $1/n\sqrt{n}$, we have 
   \[
   \frac{1}{|e^{nu}|}=\frac{1}{e^{n\Re u}} \le e^{1/\sqrt{n}}
   \]
   from which we deduce that the integral over this semicircle is at most
   \[
   (n\sqrt{n})^{\alpha-2} e^{1/\sqrt{n}}\pi.
   \]
   On the two segments $u=t\pm i/(n\sqrt{n})$, $0\le t \le 1/\sqrt{n}$,  
   \[
   \frac{1}{|u|^{\alpha-1}|e^{nu}|} \le \frac{1}{t^{\alpha-1}e^{nt}}
   \]
   and thus the integrals on these two line segments are at most
   \[
   \int_{0}^{1/\sqrt{n}}\frac{dt}{t^{\alpha-1}e^{nt}}\underbrace{=}_{t\rightarrow t/n}n^{\alpha-2}\int_0^{\sqrt{n}}\frac{dt}{t^{\alpha-1}e^t}\le n^{\alpha-2} \Gamma(2-\alpha).
   \]
   The lemma follows. 
   
\end{proof}
We now turn our attention to the main term: 
\begin{equation}\label{eq:maintermchangevariable}
\int_{1/\sqrt{n}\pm i/(n\sqrt{n})}^{(0^-)} \frac{du}{(-u)^\alpha e^{nu}}\underbrace{=}_{u\rightarrow u/n} n^{\alpha-1} \int_{\sqrt{n}\pm i/\sqrt{n}}^{(0^-)} \frac{du}{(-u)^\alpha e^{u}} .
\end{equation}
To convert the integral of the last expression to the Gamma function, which has the complex integral representation \cite{ww}
\[
\Gamma(z) = \frac{1}{2i\sin(\pi z)}\int_{+\infty}^{(0^-)}e^{-u}(-u)^{z-1}du,\quad z\notin \mathbb{Z}^-,
\]
we  bound the tail integrals
\[
\int_{\sqrt{n}\pm i/\sqrt{n}}^{\infty\pm i/\sqrt{n}} (-u)^{-\alpha}e^{-u}du=\mathcal{O}\left( \int_{\sqrt{n}}^{\infty} u^{-\alpha} e^{-u} \right) =\mathcal{O}\left( n^{-\alpha/2} e^{-\sqrt{n}}\right),
\]
where the last equation comes from the upper incomplete gamma function 
\[
\Gamma(z,x)=\int_{x}^\infty t^{z-1}e^{-t}dt.
\]
which is asymptotic to \cite{temme}
\[
\Gamma(z,x)\sim x^{z-1}e^{-x},
\]
for large $x$. 
With this bound the Euler's reflection formula
\[
\sin(\pi \alpha)\Gamma(1-\alpha)  = \frac{\pi}{\Gamma(\alpha)},
\]
we conclude that
\begin{equation} \label{eq:intaroundorigin}
\int_{\sqrt{n}\pm i/\sqrt{n}}^{(0^-)} \frac{du}{(-u)^\alpha e^u} \sim \frac{2i\pi}{\Gamma(\alpha)}.
\end{equation}
Thus, the asymptotic (\ref{eq:asymptoticlargetheta}) follows from (\ref{eq:intsplitmainterm}) and (\ref{eq:maintermchangevariable}).

\subsection{Case $\theta \in (3\alpha/n,\delta)$} In this subsection, we will prove that for fixed (independent of $n$) and small $\delta$, the inequality (\ref{eq:mainineq}) holds for all large $n$ and $\theta \in (3\alpha/n,\delta)$. As in the previous case, we split the contour of integration on the left side of (\ref{eq:mainineq}) into three curves. The first curve starts from $1/\sqrt{n}-i/n\sqrt{n}$, winds around the origin with a clockwise orientation, and ends at $1/\sqrt{n}+i/n\sqrt{n}$.  The other two curves are the two rays
\begin{align*}
  &1/\sqrt{n}+i/n\sqrt{n}+x,0\le x < \infty, \\
  &1/\sqrt{n}-i/n\sqrt{n}-x,-\infty < x \le 0.
\end{align*}
On the first curve, $u$ is small, so we have 
\[
1-e^{u+i\theta}=(-u-i\theta)(1+\mathcal{O}(u+\theta))
\] and \[1-e^{u+2i\theta}=(-u-2i\theta)(1+\mathcal{O}(u+\theta)).\]
Thus, the integrand of the integral on the right side of (\ref{eq:fndef}) is
\[
\frac{1+\mathcal{O}(u+\theta)}{(-u-i\theta)(-u-2i\theta)^\alpha (-u)^\alpha e^{nu}}=\frac{1+\mathcal{O}(u+\theta)}{2^{\alpha}(-i\theta)^{\alpha+1}(u/i\theta+1)(u/(2i\theta)+1)^\alpha(-u)^\alpha e^{nu}}.
\]

We let $u=v/n$ and $\theta = c /n$, for $3\alpha \le c < \delta n$, and deduce that
\begin{align}
&\int_{1/\sqrt{n}\pm i/n\sqrt{n}}^{(0^-)} \frac{1+\mathcal{O}(u+\theta)}{2^{\alpha}(-i\theta)^{\alpha+1}(u/i\theta+1)(u/(2i\theta)+1)^{\alpha}(-u)^{\alpha}e^{nu}}du \nonumber \\
= &\frac{n^{2\alpha}(1+\mathcal{O}(1/\sqrt{n}+\theta))}{2^{\alpha}(-ic)^{\alpha+1}}\int_{\sqrt{n}\pm i/\sqrt{n}}^{(0^-)} \frac{dv}{(v/ic+1)(v/(2ic)+1)^{\alpha}(-v)^{\alpha}e^{v}} . \label{eq:mainintegral}
\end{align}
We rewrite the integral in this expression as
\begin{align}
J(\theta) &:= \int_{\sqrt{n}\pm i/\sqrt{n}}^{(0^-)} \frac{dv}{(v/ic+1)(v/(2ic)+1)^{\alpha}(-v)^{\alpha}e^{v}} \nonumber \\ 
&= \int_{\sqrt{n}\pm i/\sqrt{n}}^{(0^-)} \frac{dv}{(-v)^{\alpha}e^{v}}+\int_{\sqrt{n}\pm i/\sqrt{n}}^{(0^-)} \left( \frac{1}{(v/ic+1)(v/(2ic)+1)^\alpha } -1\right)\frac{dv}{(-v)^\alpha e^v}. \label{eq:mainterm}
\end{align}
To obtain an upper bound for the second integral of the above expression, we break up this integral into 
\begin{align}
& \int_{\epsilon\pm i/ \sqrt{n}}^{(0^-)}  \left( \frac{1}{(v/ic+1)(v/(2ic)+1)^\alpha } -1\right)\frac{dv}{(-v)^\alpha e^v} \nonumber \\
+ & \int_{\epsilon \pm i/\sqrt{n}}^{\sqrt{n}\pm i/\sqrt{n}} \left( \frac{1}{(v/ic+1)(v/(2ic)+1)^\alpha } -1\right)\frac{dv}{(-v)^\alpha e^v} \label{eq:smallerterm}
\end{align}
for a small $\epsilon$, independent of $n$ (see Figure \ref{fig:contour}). 

\begin{figure} \label{fig:contour}
    \centering
    \includegraphics[width=0.5\linewidth]{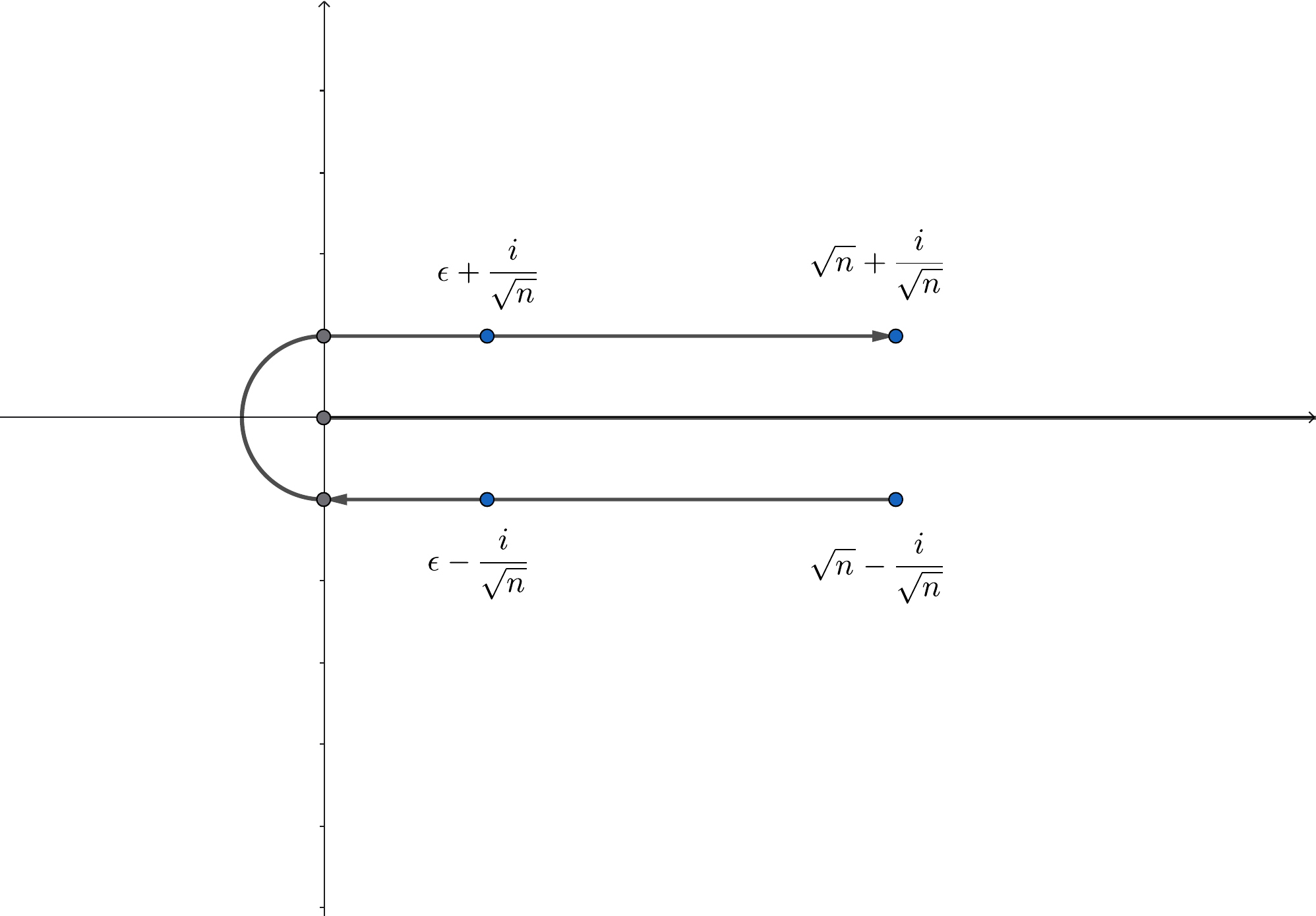}
    \caption{Contour of integration}
    \label{fig:placeholder}
\end{figure}

\begin{lemma}\label{lem:boundintegrand}
    If $z\in i\mathbb{R}$, then 
    \[
    \left| \frac{1}{(1+z)(1+z/2)^\alpha}-1 \right| \le  \left(1+\frac{\alpha}{2} \right)|z|.
    \]
   On the other hand, if $z\in\mathbb{C}$ and $|z|<1$, then the left side is at most
  \[
\frac{|z|}{(1-|z|)^2(1-|z|/2)^\alpha}+\frac{\alpha|z|}{2(1-|z|)(1-|z|/2)^{\alpha+1}}.
\]
\end{lemma}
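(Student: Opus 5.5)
Write $h(z)=(1+z)^{-1}(1+z/2)^{-\alpha}$, so the statement asks for bounds on $|h(z)-1|$. The natural tool is the fundamental theorem of calculus along the segment from $0$ to $z$:
\[
h(z)-1=\int_0^1 \frac{d}{ds}h(sz)\,ds = z\int_0^1 h'(sz)\,ds .
\]
Logarithmic differentiation gives
\[
h'(w)=-\frac{1}{(1+w)^2(1+w/2)^\alpha}-\frac{\alpha}{2(1+w)(1+w/2)^{\alpha+1}} .
\]
Both parts of the lemma then reduce to bounding $|h'(sz)|$ uniformly for $s\in[0,1]$. Here $(1+w/2)^\alpha$ is the principal branch, which is analytic on the whole segment in both cases: for $z\in i\mathbb{R}$ we have $\Re(1+sz/2)=1>0$, and for $|z|<1$ we have $|sz/2|<1/2$.

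\textbf{Imaginary case.} For $w=sz$ purely imaginary, $|1+w|=\sqrt{1+|w|^2}\ge 1$ and $|1+w/2|\ge 1$. Since $\alpha>0$, this gives $|(1+w/2)^{\alpha}|=|1+w/2|^\alpha\ge 1$. Hence each term of $h'$ is bounded by its numerator constant, so $|h'(w)|\le 1+\alpha/2$. Integrating over $s\in[0,1]$ then yields $|h(z)-1|\le(1+\alpha/2)|z|$.

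\textbf{Disc case.} For $|z|<1$ and $w=sz$, use $|1+w|\ge 1-|w|\ge 1-|z|$ and $|1+w/2|\ge 1-|z|/2$. For real $\alpha$ and the principal branch, $|(1+w/2)^\alpha|=|1+w/2|^\alpha$ because the argument contributes no modulus. This gives
\[
|h'(sz)|\le \frac{1}{(1-|z|)^2(1-|z|/2)^\alpha}+\frac{\alpha}{2(1-|z|)(1-|z|/2)^{\alpha+1}},
\]
and multiplying by $|z|$ gives exactly the stated bound.

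The only point requiring care, and the step I expect to be the main obstacle, is the branch and modulus bookkeeping. One has to confirm that the principal power is continuous along the segment, so that the fundamental theorem of calculus applies, and that $|(1+w/2)^{\alpha}|$ equals $|1+w/2|^{\alpha}$ for real $\alpha$. Both follow from $\Re(1+w/2)>0$ in each case, which keeps us away from the branch cut.
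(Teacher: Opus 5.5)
Your proposal is correct and is essentially the paper's proof: write $h(z)-1$ as the integral of $h'$ along the segment from $0$ to $z$, then bound the two terms of $h'$ pointwise, using $|1+w|,|1+w/2|\ge 1$ on the imaginary axis and $|1+w|\ge 1-|z|$, $|1+w/2|\ge 1-|z|/2$ in the disc. Your version is slightly cleaner than the paper's. The paper writes $(1-z/2)^\alpha$ in place of $(1+z/2)^\alpha$ and leaves the branch bookkeeping implicit, though neither affects the final bounds.
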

\begin{proof}
If we let 
\[
f(z)=\frac{1}{(1+z)(1-z/2)^\alpha},
\]
then 
\[
f(z)-1=\int_l f'(u)du
\]
where the integral is taken over the  line segment $l(t)=tz $, $0\le t\le 1$. The modulus of the right side is at most
\[
\int_l |f'(u)| |du|=\int_l \left| \frac{1}{(1+u)^2(1-u/2)^\alpha } +\frac{\alpha}{2(1+u)(1-u/2)^{\alpha+1}}\right| |du|.
\]
If $z\in i\mathbb{R}$, then  $u$ is purely imaginary and  the integrand on the right side is at most
\[
1+\frac{\alpha}{2}
\]
from which the case $z\in i\mathbb{R}$ follows. On the other hand, if $|z|<1$, then this integrand is at most

\[
\frac{1}{(1-|z|)^2(1-|z|/2)^\alpha}+\frac{\alpha}{2(1-|z|)(1-|z|/2)^{\alpha+1}}
\]
and we complete this case. 

\end{proof}
If $v$ lies in the contour of the first integral in (\ref{eq:smallerterm}), then $|v/ic|<2 \epsilon<1$  for large $n$ and small $\epsilon$. By Lemma \ref{lem:boundintegrand}, we have that
\[
\left| \frac{1}{(v/ic+1)(v/(2ic)+1)^\alpha } -1\right|
\]
is at most
\[
\left( \frac{1}{(1-2\epsilon)^2(1-\epsilon)^{\alpha}}+\frac{\alpha}{2(1-2\epsilon)(1-\epsilon)^{\alpha+1}} \right)\frac{|v|}{c}.
\]
We deduce that the modulus of the first integral in   (\ref{eq:smallerterm}) is $\mathcal{O}(\epsilon^{2-\alpha})$, which is small for small $\epsilon$. 

We now consider  the second  integral in   (\ref{eq:smallerterm}), which is the integral over the segment
\[
i/\sqrt{n}+x, \epsilon \le x \le \sqrt{n}
\]
minus the integral over the conjugate of this segment. Since $\epsilon$ is independent of $n$, we have that as $n\rightarrow\infty$,  the integrand of this integral in this segment and its conjugate are 
\[
\left( \frac{1}{(x/ic+1)(x/(2ic)+1)^\alpha } -1\right)\frac{1+\mathcal{O}(1/\sqrt{n})}{x^\alpha e^{\pm i\alpha \pi} e^x}.
\]
Thus the second integral of (\ref{eq:smallerterm}) is
\[
-2i\sin(\pi \alpha)\int_{\epsilon}^{\sqrt{n}} \left( \frac{1}{(x/ic+1)(x/(2ic)+1)^\alpha } -1\right)\frac{1+\mathcal{O}(1/\sqrt{n})}{x^\alpha  e^x}dx.
\]
We apply Lemma \ref{lem:boundintegrand}, 
\[
\left| \frac{1}{(x/ic+1)(x/(2ic)+1)^\alpha } -1\right|\le \left(1+\frac{\alpha}{2}\right)\frac{x}{c},
\]
to conclude that the modulus of the main term of this expression is at most
\[
\left(1+\frac{\alpha}{2}\right)\frac{2|\sin(\pi \alpha)|}{c}\int_0^\infty x^{1-\alpha}e^{-x}dx =\left(1+\frac{\alpha}{2}\right)\frac{2|\sin(\pi \alpha)|\Gamma(2-\alpha)}{c}.
\]
As $1<\alpha<2$, we have $|\sin(\pi \alpha)| = \sin(\pi(\alpha-1))$ from which the last expression becomes
\[
\left( 1+ \frac{\alpha}{2}\right) \frac{2\pi}{c\Gamma(\alpha-1)}=\left( 1+ \frac{\alpha}{2}\right) \frac{2\pi (\alpha-1)}{c\Gamma(\alpha)}.
\]
By (\ref{eq:intaroundorigin}),  the first integral of (\ref{eq:mainterm}) is asymptotic to
\[
\frac{2i\pi}{\Gamma(\alpha)}.
\]
Thus the imaginary part of  (\ref{eq:mainterm})  is at least 
\begin{equation} \label{eq:Imlowerbound}
\frac{2\pi}{\Gamma(\alpha)}-\left( 1+ \frac{\alpha}{2}\right) \frac{2\pi (\alpha-1)}{c\Gamma(\alpha)} + \mathcal{O}(\epsilon^{2-\alpha})> \frac{2\pi}{\Gamma(\alpha)} \left( 1-\frac{2(\alpha-1)}{c} \right)    
\end{equation}
and consequently, the modulus of the main term in the last expression of (\ref{eq:mainintegral}) is more than
\[
\frac{2\pi n^{2\alpha}}{2^\alpha c^{\alpha+1}\Gamma(\alpha)}\left( 1-\frac{2(\alpha-1)}{c} \right).
\]
We claim that for $c\ge 3 \alpha$, this expression is at least 
\[
\frac{\pi n^{2\alpha}}{c^{2\alpha}}.
\]
Indeed, this claim is equivalent to 
\[
\left( \frac{c}{2} \right)^{\alpha-1} - (\alpha-1)\left( \frac{c}{2} \right)^{\alpha-2}\ge \Gamma(\alpha).
\]
This inequality holds as the derivative of the function 
\[
x^{\alpha-1}-(\alpha-1)x^{\alpha-2}-\Gamma(\alpha)
\]
is 
\[
(\alpha-1)x^{\alpha-2}-(\alpha-1)(\alpha-2)x^{\alpha-3} > 0
\]
for $x \in [ 3 \alpha/2,\infty)$ and the value of this function at $x=3\alpha/2$ is
\[
(3\alpha/2)^{\alpha-2} (\alpha/2+1) -\Gamma(\alpha),
\]
which is positive by the lemma below.
\begin{lemma}
For $1<\alpha<2$, we have 
\[
(3\alpha/2)^{\alpha-2}(\alpha/2+1) >  \Gamma(\alpha).
\]
\end{lemma}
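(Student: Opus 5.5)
We take logarithms and compare both sides with a common intermediate function. Write $\alpha=1+x$ with $0<x<1$ and set
\[
h(\alpha)=(\alpha-2)\log(3\alpha/2)+\log(1+\alpha/2),
\]
so the claim is $h(\alpha)>\log\Gamma(\alpha)$. Equality holds at $\alpha=1$: $h(1)=-\log(3/2)+\log(3/2)=0=\log\Gamma(1)$. So the inequality is tight at the left endpoint, and the proof must compare slopes there. We have $h'(1)=\log(3/2)-1+1/3\approx-0.261$, while $(\log\Gamma)'(1)=-\gamma\approx-0.577$. The gap is comfortable, but we need a quantitative upper bound for $\log\Gamma$ near $1$.

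\emph{Properties of $h$.} First, $h''(\alpha)=1/\alpha+2/\alpha^2-1/(2+\alpha)^2>0$, so $h$ is convex on $[1,2]$. Second, by convexity $h$ lies above its tangent line at $\alpha=1$:
\[
h(1+x)\ge \bigl(\log(3/2)-2/3\bigr)x .
\]

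\emph{Upper bound for $\log\Gamma$.} We use the expansion
\[
\log\Gamma(1+x)=-\gamma x+\sum_{k\ge2}\frac{(-1)^k\zeta(k)}{k}x^k ,
\]
which converges for $|x|<1$. For $0<x<1$ it is an alternating series with decreasing terms, since $\zeta(k)/k$ decreases. Hence
\[
\log\Gamma(1+x)\le -\gamma x+\tfrac{\zeta(2)}{2}x^2 .
\]

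\emph{Small $x$.} Combining the two bounds, it suffices that
\[
\bigl(\log(3/2)-2/3\bigr)x> -\gamma x+\tfrac{\pi^2}{12}x^2,
\qquad\text{i.e.}\qquad
x<\frac{\gamma+\log(3/2)-2/3}{\pi^2/12}\approx 0.38 .
\]
This is strict for $0<x\le 0.3$, i.e. for $1<\alpha\le1.3$.

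\emph{Larger $x$.} For $\alpha\in[1.3,2)$ we use the crude bound $\Gamma(\alpha)<1$ on $(1,2)$. This holds because $\Gamma$ is strictly convex with $\Gamma(1)=\Gamma(2)=1$. It therefore suffices to show $h>0$ on $[1.3,2)$. A direct evaluation gives $h(1.3)=\log(1.65)-0.7\log(1.95)\approx0.033>0$. Also $h'(1.3)=\log(1.95)-0.7/1.3+1/3.3>0$. Since $h$ is convex, $h'$ is increasing, so $h$ increases on $[1.3,2]$ and stays positive there.

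The main obstacle is the tightness at $\alpha=1$. Crude bounds on $\Gamma$, such as $\Gamma\le1$ or convexity chords of $\log\Gamma$ through $\Gamma(3/2)=\sqrt\pi/2$, have slope at $1$ no better than about $-0.24$. That is not below $h'(1)\approx-0.26$, which is why the Taylor bound with $\gamma$ and $\zeta(2)$ is needed near $1$. The remaining work is checking the numerical constants at $\alpha=1.3$ and in the threshold $0.38>0.3$. These can be verified with rigorous decimal bounds on $\log(3/2)$, $\log 1.65$, $\log1.95$, $\gamma$ and $\pi^2$.
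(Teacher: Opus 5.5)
Your proof is correct, and it takes a different route from the paper's. The paper also takes logarithms and uses the equality at $\alpha=1$. It then finishes with a single monotonicity step on all of $(1,2)$: it shows that the derivative of the left side, $\ln(3\alpha/2)+(\alpha-2)/\alpha+1/(2+\alpha)$, exceeds $\Gamma'(\alpha)/\Gamma(\alpha)$. For this it invokes Alzer's inequality $\Gamma'(\alpha)/\Gamma(\alpha)<\ln\alpha-1/(2\alpha)$. That reduces everything to the positivity of the elementary function $\ln(3/2)+1+\frac{1}{2+\alpha}-\frac{3}{2\alpha}$, which is increasing on $[1,2]$ with value about $0.24$ at $\alpha=1$. (The paper's displayed derivative difference carries a stray $+\gamma$, but the argument survives without it.)

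You instead use the convexity of $h$ to replace it by its tangent line at $1$. You bound $\log\Gamma(1+x)$ above by the truncated alternating Maclaurin series $-\gamma x+\zeta(2)x^2/2$, which only works up to $x\approx 0.384$. You then cover $[1.3,2)$ with the crude bound $\Gamma<1$, together with $h(1.3)>0$ and $h'(1.3)>0$. The numbers check out: the threshold is $\approx 0.384>0.3$, $h(1.3)\approx 0.033$, and $h'(1.3)\approx 0.43$.

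The paper's route gives a uniform, split-free argument, at the cost of citing a nontrivial digamma inequality. Yours is self-contained, using only the standard series for $\log\Gamma$, the alternating-series estimate, and $\Gamma(1)=\Gamma(2)=1$ with convexity. The price is a case split and a few explicit numerical verifications.
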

\begin{proof}
The claimed inequality is equivalent to 
 \[
 (\alpha-2)\ln(3\alpha/2)+\ln(\alpha/2+1)>\ln\Gamma(\alpha). 
 \]
 Since the two sides are equal when $\alpha=1$, it suffices to show the derivative (in $\alpha$) of the left side is larger than that of the right side. The difference between these two derivatives is 
 \[
 \ln(3\alpha/2)+ \frac{\alpha-2}{\alpha}+\frac{1}{2+\alpha}+\gamma-\frac{\Gamma'(\alpha)}{\Gamma(\alpha)}.
 \]
We apply the inequality \cite{alzer}
\[
\frac{\Gamma'(\alpha)}{\Gamma(\alpha)}<\ln(\alpha)-\frac{1}{2\alpha}
\]
 to deduce that the expression above is at least
 \[
\ln(3/2)+\gamma +1 +\frac{1}{2+\alpha} -\frac{3}{2\alpha}>0
 \]
 and the lemma follows.
 
\end{proof}
As 
\[
\frac{\pi}{(2-2\cos\theta)^\alpha} \sim \frac{\pi}{\theta^{2\alpha}}=\frac{\pi n^{2\alpha}}{c^{2\alpha}}
\]
for small $\theta$, we conclude (\ref{eq:mainineq}). 

\section{Zero distribution of $S^{(\alpha)}_n(z)$}
We will show that, for large $n$, all zeros of $S_n^{(\alpha)}(z)$ lie on $(-1,1)$ by showing that this polynomial has at least $n$ (the degree of this polynomial) zeros on this interval. We recall from (\ref{eq:s_nform}) that $\pi S_n^{(\alpha)}(\cos(\theta))$ is the imaginary part of 
\[
g_n(\theta)=f_n(\theta)-\frac{\pi i}{(2-2\cos \theta)^\alpha}.
\]
By (\ref{eq:mainineq}), if $\theta^{\pm}$ satisfy $f_n(\theta^{\pm}) \in i\mathbb{R^{\pm}}$, then $g_n(\theta^{\pm})\in i\mathbb{R}^{\pm}$. By the Intermediate Value Theorem, there is a zero of $g_n(\theta)$ in the interval between $\theta^+$ and $\theta^-$. To count the number of such intervals, we will compute the change in argument of the curve 
\[
f_n(\theta)=e^{-in\theta}\int_{+\infty}^{(0^-)}\frac{du}{(1-e^{u+i\theta})(1-e^u)^\alpha(1-e^{u+2i\theta})^\alpha e^{nu}}, 
\]  
for $\theta \in (3\alpha/n,\pi-1/(n\sqrt{n}))$.  

The change of argument of the first factor $e^{-in\theta}$ of $f_n(\theta)$  on $(3\alpha/n,\pi-1/(n\sqrt{n}))$ is 
\begin{equation} \label{eq:deltaArgexp}
\Delta \arg_{(3\alpha/n,\pi-1/(n\sqrt{n}))} e^{-in\theta}=-n\pi + 3\alpha+\frac{1}{\sqrt{n}}.
\end{equation}
Let 
\[
r_n(\theta)= \int_{+\infty}^{(0^-)}\frac{du}{(1-e^{u+i\theta})(1-e^u)^\alpha(1-e^{u+2i\theta})^\alpha e^{nu}},
\]
be the second factor of $f_n(\theta)$. We recall that for $\theta \in I:=(\delta,\pi-1/(n\sqrt{n}))$, $r_n(\theta)$ is asymptotic to the right side of (\ref{eq:asymptoticlargetheta}). Thus the change in argument of $r_n(\theta)$ on $I$ is 
\[
-\Delta \arg_I (1-e^{i\theta}) -\alpha \Delta \arg_I (1-e^{2i\theta})+o(1). 
\]
Since $1-e^{i\theta}$ and $1-e^{2i\theta}$ lie in the open right-half plane for $\theta \in I$, their changes in argument are the differences of principal angles at the two endpoints which are $-\Arg(1-e^{i\delta})+o(1)$ and $\pi/2-\Arg(1-e^{2i\delta})+o(1)$ respectively. Thus
\begin{align*}
\Delta \arg_I r_n(\theta)&= \Arg(1-e^{i\delta})-\alpha \pi/2+\alpha\Arg(1-e^{2i\delta})+o(1) \\
&=-\pi/2 -\alpha\pi+\mathcal{O}(\delta).
\end{align*}
For $\theta\in (3\alpha/n,\delta)$, $r_n(\theta)$ is (\ref{eq:mainintegral}), where the imaginary part of the integral, $J(\theta)$, in (\ref{eq:mainintegral}) is at least (\ref{eq:Imlowerbound}), which is positive as $c\ge 3\alpha$. Thus, the change in argument of $r_n(\theta)$ on $(3\alpha/n,\delta)$ is 
\[
\Delta \arg_{(3\alpha/n,\delta)} J(\theta)=\Arg J(\delta)-\Arg J(3\alpha/n).
\]
As 
\[
r_n(\delta)\sim \frac{3 i\pi n^{\alpha-1}}{\Gamma(\alpha)(1-e^{i\delta})(1-e^{2i\delta})^\alpha}
\]
and 
\[
r_n(\delta) \sim \frac{n^{2\alpha} J(\delta)}{2^\alpha (-i\delta/n)^{\alpha+1}}
\]
by (\ref{eq:asymptoticlargetheta}) and (\ref{eq:mainintegral}) respectively, we have 
\[
\Arg J(\delta) = \frac{\pi}{2}+\mathcal{O}(\delta).  
\]
Consequently, as $J(\theta)$ lies in the upper half plane for $3\alpha<\theta<\delta$, 
\[
|\Delta \arg_{(3\alpha/n,\delta)} J(\theta)|<\pi/2+\mathcal{O}(\delta).
\]
We conclude from 
\[
\Delta \arg_{(3\alpha/n,\pi-1/(n\sqrt{n}))} r_n(\theta) = \Delta \arg_I r_n(\theta)+\Delta \arg_{(3\alpha/n,\delta)} r_n(\theta)
\]
and (\ref{eq:deltaArgexp}) that  
\begin{equation}\label{eq:changeargf_n}
\Delta \arg_{(2\alpha/n,\pi-1/(n\sqrt{n}))}f_n(\theta)= -n\pi+3\alpha -\pi/2-\alpha \pi +A
\end{equation}
where $A<\pi/2+\mathcal{O}(\delta)$.

Let  $\theta_1 <\theta_2 <\ldots <\theta_m$  be the (finite) sequence of angles in $(3\alpha/n,\pi-1/(n\sqrt{n}))$  such that $f_n(\theta_k)\in i\mathbb{R}$ ,  for $1\le k\le m$, and $f_k (\theta) f_{k+1}(\theta)<0$ (for $k<m$). By (\ref{eq:changeargf_n})
\[
m \ge \left \lfloor \frac{|\Delta \arg_{(3\alpha/n,\pi-1/(n\sqrt{n}))}f_n(\theta)|}{\pi} \right \rfloor\ge n
\]
for sufficiently small $\delta$. From (\ref{eq:s_nform}) and (\ref{eq:mainineq}), we conclude from the Intermediate Value Theorem that $S_n^{(\alpha)}(\cos\theta)$  has at least a zero on each interval $(\theta_k,\theta_{k+1})$ for $1\le k<m$. Thus $S_n ^{(\alpha)}(\cos \theta)$ has at least $n-1$ zeros on $(3\alpha/n,\pi-1/(n\sqrt{n}))$, each of which gives a zero of $S_n^{(\alpha)}(z)$ on $(-1,1)$. As the degree of 
\[
S_n^{(\alpha)}(z)=\sum_{k=0}^n C_k ^{(\alpha)}(z)
\]
is $n$, the one remaining zero of $S_n^{(\alpha)}(z)$ must be real. The claim that all zeros of $S_n^{(\alpha)}(z)$ lie on $(-1,1)$ follows from the lemma below.
\begin{lemma}
 If  $1 < \alpha<2$, then    $S_n^{(\alpha)}(x)\ne 0$  for $x\in(-\infty,1]\cup[1,\infty)$.
\end{lemma}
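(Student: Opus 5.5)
The plan is to treat $x\ge 1$ and $x\le -1$ separately; the statement should read $x\in(-\infty,-1]\cup[1,\infty)$. In both cases I will read off positivity from the generating function (\ref{eq:Gegenbauergen}), in the form $\sum_{n}S_n^{(\alpha)}(x)t^n=(1-t)^{-1}(1-2xt+t^2)^{-\alpha}$. For $y\ge 1$ I factor
\[
1-2yt+t^2=(1-at)(1-t/a),\qquad a=y+\sqrt{y^2-1}\ge 1 .
\]
Then $(1-at)^{-\alpha}$ and $(1-t/a)^{-\alpha}$ are power series with strictly positive coefficients $a^{\pm k}(\alpha)_k/k!$. Hence so is $(1-2yt+t^2)^{-\alpha}$, so $C_k^{(\alpha)}(y)>0$ for all $k$.

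\textbf{Case $x\ge 1$.} Since every $C_k^{(\alpha)}(x)>0$, we get $S_n^{(\alpha)}(x)>0$ immediately. (Equivalently, the coefficients of $(1-t)^{-1}(1-2xt+t^2)^{-\alpha}$ are positive.)

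\textbf{Case $x\le -1$.} Put $y=-x\ge 1$. By the parity $C_k^{(\alpha)}(-y)=(-1)^kC_k^{(\alpha)}(y)$, which follows from replacing $t$ by $-t$ in (\ref{eq:Gegenbauergen}), we have
\[
S_n^{(\alpha)}(x)=\sum_{k=0}^n(-1)^k c_k,\qquad c_k:=C_k^{(\alpha)}(y)>0 .
\]
The key step is to show that $c_k$ is strictly increasing in $k$. The differences $c_k-c_{k-1}$ are the coefficients of
\[
(1-t)(1-2yt+t^2)^{-\alpha}=\bigl[(1-t)(1-at)^{-\alpha}\bigr](1-t/a)^{-\alpha}.
\]
The $k$-th coefficient of the bracket is $1$ for $k=0$, and for $k\ge 1$ it equals
\[
a^{k-1}\frac{(\alpha)_{k-1}}{(k-1)!}\left(\frac{a(\alpha+k-1)}{k}-1\right).
\]
This is strictly positive because $a\ge 1$ and $\alpha>1$ give $a(\alpha+k-1)>k$. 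Multiplying by the positive series $(1-t/a)^{-\alpha}$ keeps every coefficient strictly positive. Therefore $c_k>c_{k-1}$ for $k\ge1$.

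With monotonicity in hand, I group the alternating sum from the top:
\[
(-1)^nS_n^{(\alpha)}(x)=(c_n-c_{n-1})+(c_{n-2}-c_{n-3})+\cdots,
\]
ending with the extra term $c_0>0$ when $n$ is even. Every group is positive, so $(-1)^nS_n^{(\alpha)}(x)>0$, and in particular $S_n^{(\alpha)}(x)\neq 0$.

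The main obstacle is the monotonicity step. The naive pairing $(1-t)(1-t/a)^{-\alpha}$ fails, since its coefficients can be negative when $a$ is large. The factor $1-t$ must instead be attached to $(1-at)^{-\alpha}$, and it is exactly there that the hypotheses $a\ge1$ and $\alpha>1$ are used.
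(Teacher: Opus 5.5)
Your proof is correct and has the same skeleton as the paper's proof. Both get positivity of $C_k^{(\alpha)}(y)$ for $y\ge 1$ from the factorization of $1-2yt+t^2$ and the binomial series. Both then use the parity $C_k^{(\alpha)}(-y)=(-1)^kC_k^{(\alpha)}(y)$, strict monotonicity of $k\mapsto C_k^{(\alpha)}(y)$, and pairing of consecutive terms in the alternating sum. You are also right that the statement should read $(-\infty,-1]\cup[1,\infty)$; the paper's proof treats exactly those two rays.

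The genuine difference is the monotonicity step. The paper derives it from the three-term recurrence $(n+1)C_{n+1}^{(\alpha)}(x)=2(n+\alpha)xC_n^{(\alpha)}(x)-(n+2\alpha-1)C_{n-1}^{(\alpha)}(x)$. Using $x\ge 1$ and $C_n^{(\alpha)}(x)>0$, it obtains $(n+1)\bigl(C_{n+1}^{(\alpha)}(x)-C_n^{(\alpha)}(x)\bigr)\ge(n+2\alpha-1)\bigl(C_n^{(\alpha)}(x)-C_{n-1}^{(\alpha)}(x)\bigr)$. It then inducts from $C_1^{(\alpha)}(x)-C_0^{(\alpha)}(x)=2\alpha x-1>0$.

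You instead identify the differences as the coefficients of $(1-t)(1-2yt+t^2)^{-\alpha}$. You make their positivity visible by attaching $1-t$ to the dominant factor $(1-at)^{-\alpha}$. Your version stays entirely within the generating function and gives explicit positive closed-form pieces.

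The recurrence version is somewhat more robust in $\alpha$. Its induction only needs $\alpha>1/2$. By contrast, the termwise nonnegativity of your bracket fails for $\alpha<1$ when $a$ is close to $1$, since its $t^1$ coefficient is $a\alpha-1$. For the lemma as stated, with $1<\alpha<2$, both arguments are complete.
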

\begin{proof}
In the case $x>1$, we have  
\[
1-2xt+t^2 =(1-t_1 t)(1-t_2 t)
\]
where $t_1$ and $t_2$ are positive real numbers.  We deduce from the generating function
\[
\sum_{n=0}^\infty C^{(\alpha)}_n(x)t^n = (1-t_1 t)^{-\alpha}(1-t_2t)^{-\alpha}
\]
and the binomial expansions that $C_n^{(\alpha)}(x)>0$, $\forall n$.  Thus $S_n^{(\alpha)}(x)\ne 0$ for $x\ge 1$. Moreover, in the case $x\ge 1$, we conclude from the recurrence 
\[
(n+1)C^{(\alpha)}_{n+1}(x)=2(n+\alpha)xC_n^{(\alpha)}(x)-(n+2\alpha-1)C^{(\alpha)}_{n-1}(x), \quad n\ge 1,
\]
that
\[
(n+1)C^{(\alpha)}_{n+1}(x)-(n+1)C^{(\alpha)}_{n}(x)\ge (n+2\alpha-1)C^{(\alpha)}_n(x)-(n+2\alpha-1)C^{(\alpha)}_{n-1}(x).
\]
Thus, by induction, we have $C^{(\alpha)}_{n+1}(x)> C^{(\alpha)}_{n}(x)$ for $x\ge 1$.  

We recall that $C_n^{(\alpha)}(x)$ is an even/odd polynomial if $n$ is even/odd. Thus, in the case $x\le -1$, we have
\[
S^{(\alpha)}_n(x)=( C_n^{(\alpha)}(|x|)-C_{n-1}^{(\alpha)}(|x|))+\cdots + (C_2^{(\alpha)}(|x|)-C_1^{(\alpha)}(|x|))+C_0^{(\alpha)}(|x|)>0
\]
if $n$ is even and 
\[
S^{(\alpha)}_n(x)=-( C_n^{(\alpha)}(|x|)-C_{n-1}^{(\alpha)}(|x|))- \cdots - (C_1^{(\alpha)}(|x|)-C_0^{(\alpha)}(|x|))<0
\]
if $n$ is odd. The proof of this lemma is complete. 
\end{proof}

For the remainder of this paper, we will find the limiting probability density function of $S_n^{(\alpha)}(z)$ on $(-1,1)$. For each $n>0$ , $x\in(-1,1)$, and small $\epsilon>0$, let $N_{n,\epsilon}(z)$ be the number of zeros of $S_{n}^{(\alpha)}(x)$
on the interval $(x,x+\epsilon)$. The limiting probability density function of the zeros of $S_{n}^{(\alpha)}(x)$ evaluated at each
$x\in(-1,1)$ is defined as 
\[
\lim_{\epsilon\rightarrow0}\frac{1}{\epsilon}\lim_{n\rightarrow\infty}\frac{N_{n,\epsilon}(x)}{n}.
\]
For each $x=\cos\theta \in(-1,1)$, we have  
\[
\pi S_n^{(\alpha)}(\cos(\theta))=\Im \left( f_n(\theta)\right)-\frac{\pi}{(2-2\cos\theta)}
\]
where $f_n(\theta)=e^{-in\theta} r_n(\theta)$ and
\[
r_n(\theta)\sim  \frac{2i\pi n^{\alpha-1}}{\Gamma(\alpha)(1-e^{i\theta})(1-e^{2i\theta})^\alpha }.
\]
Thus
\begin{align*}
N_{n,\epsilon} &= \frac{|\Delta \arg_{(\arccos(x+\epsilon),\arccos(x))}f_n(\theta)|}{\pi}+\mathcal{O}(1) \\
&=\frac{n}{\pi}(\arccos(x)-\arccos(x+\epsilon))+\mathcal{O}(1).
\end{align*}
We conclude that the limiting probability density function of $S_n^{(\alpha)}(x)$ on $(-1,1)$ is 
\[
-\frac{1}{\pi}\frac{d}{dx}\arccos x=\frac{1}{\pi\sqrt{1-x^2}}.
\]

\subsection*{Data availability statement}

The authors confirm that the data supporting the findings of this
study are available within the article and its references below.

\subsection*{Funding and/or Competing interests}

The authors confirm that they have no conflicts of interest and no
funding was received to assist with the preparation of this manuscript.

\subsection*{Contribution Statement}

All authors contributed to the study conception and design. All authors
read and approved the manuscript.



\end{document}